\pdfoutput=1
\documentclass[11pt,a4paper]{article}
\usepackage[T1]{fontenc}
\usepackage[utf8]{inputenc}
\usepackage[english]{babel}
\usepackage[tbtags]{amsmath}
\usepackage{amsfonts,amssymb,amsthm}
\usepackage{lmodern}
\usepackage[a4paper,margin=25mm]{geometry}
\usepackage[hidelinks,hyperfootnotes=false]{hyperref}
\theoremstyle{plain}
\newtheorem{lemma}{Lemma}
\newtheorem{theorem}{Theorem}

\newcommand{\OmegaC}{\Omega^{\rm{cont}, 1}_{A((t))/A}}
\newcommand{\res}{\mathrm{Res}}
\newcommand{\Z}{\mathbb{Z}}

\title{Pairings on the algebra of Laurent series over a ring}
\author{Vladislav Levashev\\[0.5em]
  \small National Research University Higher School of Economics\\
  \small\texttt{valevashev@hse.ru}}
\date{}

\begin{document}
\maketitle
\begin{abstract}
	We prove that continuous $A$-bilinear pairings on the ring of Laurent
	series that are invariant under continuous automorphisms coincide,
	up to a constant, with the pairing given by the residue of a
	differential form over any commutative associative ring with identity.
\end{abstract}

\begingroup
\renewcommand{\thefootnote}{}
\footnotetext{The study has been funded within the framework of the HSE University Basic Research Program (HSE-BR-2025-060).
}
\endgroup
\medskip

Let $A$ be a commutative associative ring with identity, and let
$A((t)) = A[[t]][t^{-1}]$ be the ring of Laurent series over $A$.
The algebra $A((t))$ is equipped with the natural topology in which $A$-submodules
of the form $t^{m}A[[t]]$ form a  neighbourhood basis at zero.
Also, let $\Omega^{1}_{A((t))/A}$ be the module of Kähler
1-differentials of $A((t))$ over $A$.
Define the module of continuous Kähler 1-differentials as follows:

$$\OmegaC = \Omega^{1}_{A((t))/A}/Q,$$
where $Q$ is generated by elements of the form
$df - \frac{\partial f}{\partial t}dt$, $f\in A((t))$.
By definition, every element
$\omega\in \OmegaC$ can be written uniquely
as $(\sum a_{i} t^{i}) dt$.
Put $\res(\omega) = a_{-1}$. We also define
$$\res(f, g) = \res(f\,dg),$$
where $f\,dg \in \OmegaC$. This defines
an $A$-bilinear pairing on $A((t))$, which is continuous in each argument, i.e., continuous in either argument when the other is fixed. Moreover, if $\varphi$ is a continuous $A$-algebra
automorphism of $A((t))$, then
$\res(f,g) = \res(\varphi(f), \varphi(g))$
(see \cite[Ch.~II, §~11]{Ser68},
\cite[Proposition 5.3]{GorOsi16}).

For $l \geq 2$, denote by $\varphi_l$ the continuous $A$-linear
automorphism of the algebra $A((t))$ under which $t$ is sent
to $t + t^l$. This uniquely determines it by the rule:
$$\varphi_l(\sum_{k} a_{k} t^{k}) = \sum_{k} a_k (t + t^l)^{k}.$$

We prove the following result, generalizing Theorem 4
of \cite{Lev26} (see also Lemma 5.7 and Lemma 5.8
of \cite{OsiZhu16}):

\begin{theorem}
	Let $(\cdot, \cdot) \colon A((t)) \times A((t)) \to A$ be an $A$-bilinear
	pairing that is continuous in each argument.
	Suppose that for all $f,g \in A((t))$ and $l \geq 2$ we have
	$(f,g) = (\varphi_l(f), \varphi_l(g))$.
	Then there exists an element $e\in A$ such that
	$(f,g) = e \res(f, g)$.
\end{theorem}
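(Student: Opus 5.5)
The plan is to reduce everything to the values $c_{m,n} = (t^m, t^n)\in A$, $m,n\in\Z$, and to show that $c_{m,n} = e\, n\,\delta_{m+n,0}$ with $e = c_{-1,1}$, since $\res(t^m,t^n) = \res(n t^{m+n-1}dt) = n\,\delta_{m+n,0}$.

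\textbf{Reduction.} By $A$-bilinearity and continuity in each argument separately, $(f,g)$ is determined by the $c_{m,n}$. Write $f=\sum a_m t^m$. The partial sums of $f$ converge to $f$ in the $t$-adic topology, so continuity in the first argument gives $(f,t^n) = \sum_m a_m c_{m,n}$. In particular, for each fixed $n$ we get $c_{m,n}=0$ for $m\ge N(n)$. Symmetrically, for each fixed $m$ we get $c_{m,n}=0$ for $n \ge M(m)$. Then $(f,g)=\sum_n b_n (f,t^n)$ by continuity in the second argument. So it suffices to prove the identity on monomials.

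\textbf{Invariance relations.} Since $\varphi_l(t^k) = t^k(1+t^{l-1})^k=\sum_{i\ge 0}\binom{k}{i}t^{k+i(l-1)}$, invariance and the expansion above give
\[
\sum_{(i,j)\neq(0,0)} \binom{m}{i}\binom{n}{j}\, c_{m+i(l-1),\,n+j(l-1)} = 0 .
\]
Each inner sum is finite by the one-sided vanishing just noted, so the relation makes sense. I would organise these relations by the total degree $d=m+n$: the terms with $i+j=s$ raise the total degree by $s(l-1)$. The leading part is therefore $m\,c_{m+l-1,n} + n\,c_{m,n+l-1}$, followed by terms of total degree at least $d+2(l-1)$.

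\textbf{Step 1: vanishing away from the antidiagonal.} The aim is to show $c_{p,q}=0$ whenever $p+q\neq 0$. I would argue by descending induction on the total degree $p+q$, for pairs with fixed first index, exploiting the freedom to take $l$ large. With $l$ large, the higher-order terms are pushed into the region where continuity forces vanishing, or where the induction hypothesis applies. Combining the relation for $(m,n)$ with those for $(m+1,n-1)$, $(m-1,n+1)$, and so on along a fixed total degree then yields a linear system in the values along the antidiagonal $p+q=d+l-1$.

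The main obstacle is that $A$ is arbitrary. I cannot divide by $m$, $n$ or any binomial coefficient, and continuity gives no uniform bound along antidiagonals. My first attempt would be to choose the auxiliary indices so that a coefficient equal to $\pm 1$ appears. The natural choices are the pair $m=1$ (or $n=1$), or the pairs $(m,n)$ and $(m,n)$ shifted so that $m-(m-1)=1$ after subtracting two relations. This would solve for $c_{p,q}$ directly as a $\Z$-linear combination of values already known to vanish. Alternatively, I would use two different values of $l$ (say $l$ and $l+1$) to separate the contributions.

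\textbf{Step 2: the antidiagonal.} On $p+q=0$, the same relations with $d=1-l$ should give $(n)\,c_{-n+?}$-type recursions. Concretely, taking $l=2$ and total degree $-1$, the leading part reads $m\,c_{m+1,n}+n\,c_{m,n+1}=0$ with $m+n=-1$, all higher terms vanishing by Step 1. Starting from $(m,n)=(-1,0)$ gives $c_{0,0}=0$, and from $(m,n)=(-n-1,n)$ one inductively obtains $c_{-n,n}=n\,c_{-1,1}$. Here again the recursion must be run in the direction where the coefficient in front of the unknown is $\pm1$ or where the step is trivially determined, and I would check this carefully. Setting $e=c_{-1,1}$ then finishes the proof.
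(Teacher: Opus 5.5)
Your reduction to the values $c_{m,n}=(t^m,t^n)$ and your expansion of the invariance relations are fine. The gap is Step~1, where all the difficulty lies: it is only described, not proved, and both of its ingredients fail. First, your descending induction has no base case. Continuity gives bounds $N(n)$, $M(m)$ that depend on the row or column. The higher-order terms $c_{m+i(l-1),\,n+j(l-1)}$ in the relation for $(m,n)$ lie in other rows, about which nothing is known, so an induction with fixed first index does not close. You need a \emph{uniform} half-plane $\{p+q\ge k\}$ on which all $c_{p,q}$ vanish, and this must come from invariance. The paper obtains it as follows. Continuity of $(t,\cdot)$ gives $(t,g)=0$ for $g\in t^bA[[t]]$. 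Since $\varphi_l(t)=t+t^l$ and $\varphi_l$ preserves $t^bA[[t]]$, invariance yields $(t^l,g)=0$ there too, which gives a vanishing quadrant. This quadrant is then pushed out to a half-plane using $\varphi_2$ and $\varphi_3$, whose coefficients $x-1$ and $x-2$ at the boundary are coprime.

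Second, even granting the half-plane, no descending induction on total degree can produce a unit coefficient. Suppose all $c_{p,q}$ with $p+q>D$ vanish. The only invariance relations in which level $D$ occurs and no lower level occurs are the first-order ones, $m\,x_{m+\delta}+(D-\delta-m)\,x_m=0$ for $m\in\Z$ and $\delta=l-1\ge 1$, where $x_s=c_{s,D-s}$. But $x_s=(D-s)\,y$ satisfies all of them as soon as $Dy=0$. So over $A=\mathbb{F}_2$ with $D=2$, say, they cannot force $c_{1,1}=0$, whatever auxiliary indices or values of $l$ you choose.

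The missing idea is a global one. The paper uses the first-order relations, with $\delta$ large so that all other terms fall in the half-plane, only to prove $(t^a,t^b)=b\,(t^{a+b-1},t)$ at every level. This means $(f,g)=\res(hf\,dg)$ with $h=\sum_k (t^{-k-1},t)\,t^k$. Invariance of both pairings then gives $\varphi_l(h)=h$ for all $l\ge 2$, which by the Taylor formula reads $\sum_{j\ge1}h^{[j]}t^{jl}=0$. Letting $l$ grow separates the different orders $j$ (Lemma~1) and forces $h^{[j]}=0$ for every $j\ge 1$. Since $\{\binom{k}{j}\}_{j\ge1}$ generates the unit ideal of $\Z$ for $k\ne0$, one gets $h\in A$. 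In other words, the induction has to run over the order of the Hasse--Schmidt derivative, not over the total degree. Your Step~2 then becomes unnecessary. As written, with $l=2$ only, it would also require dividing by $n$, though taking $n=1$ and $l=\delta+1$ repairs it.
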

\begin{proof}
	For any elements $a, b \in \Z$, denote by $R_{a,b}$
	the set of pairs $(x,y) \in \Z^2$ such that $x \geq a, y \geq b$. We will divide the proof into several steps.

		 1) First, we prove that there exist elements $a,b \in \Z$
		such that $(t^n, t^m) = 0$ for $(n, m) \in R_{a,b}$.
		Set $a = 2$. By assumption, the map
		$$f\in A((t)) \mapsto (t,f)\in A$$ is continuous.
		Consequently, there exists $b \in \Z$ such that
		$(t, g) = 0$ for $g \in t^{b}A[[t]]$.
		We claim that these choices of a and b have the required property.
		For an arbitrary $l \geq 2$, by assumption we have
		$$ 0 = (t, g) = (\varphi_{l}(t), \varphi_{l}(g)) = (t^l, \varphi_{l}(g)).$$
		Since g was arbitrary, the claim follows from an equality $\varphi_l(t^{b}A[[t]]) = t^{b}A[[t]]$.
		
		 2) Now, we prove the following implication:
		if $(t^n, t^m) = 0$ for $(n,m) \in R_{a, b+1} \cup R_{a+1, b}$,
		then $(t^n, t^m) = 0$ for $(n,m) \in R_{a-1, b+2} \cup R_{a+2, b-1}$.
We will prove this property for pairs $(n,m) \in R_{a+2, b-1}$,
since for the remaining pairs the argument is similar.

		We need to prove that $(t^{a'}, t^{b-1}) = 0$ for $a' > a+1$.
		If $(t^{a'}, t^{b-1}) = 0$ for all $a'$, there is nothing to prove.
		Otherwise, by continuity, there exists $x \in \Z$
		such that $(t^{x}, t^{b-1}) \neq 0$ and for all $x' > x$
		we have $(t^{x'}, t^{b-1}) = 0$.
		If $x \leq a+1$, we are done.
		Suppose that $x \geq a + 2$. Then by assumption:
		
		$$(t^{x-1}, t^{b-1}) = (\varphi_2(t)^{x-1}, \varphi_2(t)^{b-1}) = (t^{x-1}, t^{b-1}) + (x-1)(t^{x}, t^{b-1}), $$
		since the remaining summands either contain factors of the form
		$(t^{x'}, t^{b-1})$, where $x' \geq x+1$, hence
		are zero by the choice of $x$, or factors $(t^{x'}, t^{y'})$,
		where $x' \geq x-1 \geq a + 1$ and $y' \geq b$, hence
		are zero by assumption.
		Therefore $(x-1)(t^{x}, t^{b-1}) = 0$.
		Similarly, by assumption we have the equality
		$$ (t^{x-2}, t^{b-1}) = (\varphi_3(t)^{x-2}, \varphi_3(t)^{b-1}) = (t^{x-2}, t^{b-1}) + (x-2)(t^{x}, t^{b-1}), $$
		since the remaining summands either contain factors
		$(t^{x'}, t^{b-1})$, where $x' \geq x+1$, or factors
		$(t^{x'}, t^{y'})$, where $x' \geq x-2 \geq a$ and $y' \geq b+1$.
		From this we obtain $(x-2)(t^{x}, t^{b-1}) = 0$.
		Therefore $(t^{x}, t^{b-1}) = 0$.
		This contradicts the choice of $x$.

		3) For $k \in \Z$, denote by $D_{k}$ the set of pairs of integers
		$(x,y)\in \Z^2$ such that $x + y = k$.
		We claim that there exists $k$ such that for all $k' \geq k$
		and $(n,m) \in D_{k'}$ we have $(t^n, t^m) = 0$.
		Indeed, let $a,b \in \Z$ be as in step 1.
		Then put $k = a + b + 1$.
		Note that
		$$\bigsqcup_{k' \geq k} D_{k'} = \bigcup\limits_{a'+b' = k} R_{a',b'}.$$
		
		Moreover, $R_{a,b+1} \cup R_{a+1,b} \subset R_{a,b}.$
		Then it follows from step 2 that $(t^n, t^m) = 0$
		for $(n,m) \in R_{a-1, b+2} \cup R_{a+2,b-1}$. 
		Continuing similarly, we obtain the claim by induction.

		4)Let $k$ be such that $(t^n, t^m) = 0$ for $n+m \geq k$.
		In this step, we prove that for any $k' < k$ and any
		$(a,b), (a',b') \in D_{k'}$ with
		$a' - a = b - b' \geq k - k'$, the following identity holds:
		\begin{equation}\label{maineq}
			a(t^{a'}, t^{b'}) + b' (t^{a}, t^{b}) = 0.
		\end{equation}
		Indeed, set $\delta = a' - a = b - b' \geq k - k'.$
		By assumption, the following equality holds:
		$$(t^a, t^{b'})  = (\varphi_{\delta + 1}(t)^a, \varphi_{\delta + 1}(t)^{b'}) = (t^a, t^{b'}) + b'(t^a, t^{b}) + a(t^{a'}, t^{b'}),$$
		since all the remaining summands have a factor
		$(t^{x}, t^{y}),$ where either $x \geq a + 2\delta = a' + \delta$
		and $y\geq b'$, or $y \geq b' + 2\delta = b + \delta$
		and $x \geq a$, or $x = a + \delta, y = b'+\delta$. In each case $x + y \geq k' + \delta \geq k$.
		From this we obtain the required identity.
		In particular, if $b' = 1$, $a' = a + b - 1$
		and $a + 2b \geq k + 1$, we obtain
		$(t^a, t^b) = -a(t^{a + b - 1}, t)$.

		5)Fix $a, b \in \Z$. Let us prove that
		$(t^a, t^b) = b (t^{a+b-1}, t)$. Let $k \in \Z$ be such that $(t^n, t^m) = 0$ for $n + m \geq k$.
		If $a + b \geq k$, this holds since both sides are zero.
		
		Otherwise, put $k' = a + b$ and choose a pair $(c,d)$ such that
		the following three conditions hold:
		\begin{align}\label{cond}
		k' = c + d, \  a-c = d-b \geq k-k', \ c + 2d \geq k + 1
		\end{align}
		To obtain such a pair, choose an integer
		$c \in \Z$ satisfying
		$c \leq 2a + b - k$, $c \leq 2a + 2b - k - 1$,
		and put $d = k' - c$.
		Then by step 4, the third condition in (\ref{cond}) yields the equality
		$$(t^{c}, t^{d}) = -c(t^{a + b - 1}, t).$$
		
		On the other hand, apply identity (\ref{maineq})
		to the pairs $(c,d)$ and $(a,b)$:
		$$c(t^a, t^b) + b (t^c, t^d) = c((t^a, t^b) - b (t^{a+b-1}, t)) = 0.$$
		
		Since $c-1$ could also have been chosen in place of $c$, the claim follows.

		 6) Define an element $h \in A((t))$ by the equality
		$h = \sum_{k \in \Z} (t^{-k - 1}, t) t^{k}$.
		We claim that for any $f, g \in A((t))$ the following equality holds:
		\begin{align}\label{resh}
		(f,g) = \res(hf dg).
		\end{align}
		Indeed, by continuity in each argument it suffices to check equality (\ref{resh})
		for $f = t^a, g = t^b$.
		Then the left-hand side of the equality is $(t^a, t^b)$,
		and the right-hand side is $b(t^{a + b - 1}, t)$.
		This equality was verified in step 5.
		
		7) Now note that since $(\cdot, \cdot)$ is invariant
		under the automorphisms $\varphi_{l}$,
		for all $l \geq 2$ we have
		$$\res(h fdg) = \res(h \varphi_l(f) d \varphi_l(g)).$$
		
		On the other hand, since $\res$ is also invariant
		under $\varphi_{l}$,
		the right-hand side equals \linebreak{$\res(\varphi_{l}^{-1}(h) f dg).$}
		Setting $g = t$, $f = t^s$, we obtain
		that $h = \varphi_l(h)$ for any $l \geq 2$.
		The rest follows from Lemma~\ref{Hzero} and Lemma~\ref{fixed} given below.
\end{proof}
\begin{lemma}\label{Hzero}
Let $H_{i}\in A((t))$, $i\geq 1$, be a collection of elements for which there exists $m \in \Z$ such that $H_{i}\in t^{m-i}A[[t]]$. Suppose that for every $n\geq 2$ the following holds:
\begin{align}\label{sumHzero}
\sum_{i\geq 1} H_{i}t^{in} = 0.
\end{align}
Then $H_{i}=0$ for all $i\geq 1$.
\end{lemma}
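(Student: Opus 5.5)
The plan is to normalize the exponents, extract the coefficient identities from (\ref{sumHzero}), and then show $H_1=H_2=\dots=0$ one at a time by choosing $n$ large enough that only one unknown survives in a given coefficient.

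\emph{Normalization.} Write $H_i = t^{-i}G_i$ with $G_i\in t^{m}A[[t]]$, and put $p=n-1\geq 1$. Then (\ref{sumHzero}) becomes
$$\sum_{i\geq 1} G_i\, t^{ip} = 0 \quad\text{for all } p\geq 1 .$$
This sum converges in $A((t))$, because $G_i t^{ip}\in t^{m+ip}A[[t]]$. Write $G_i=\sum_{j\geq m} g_{i,j}t^j$. Comparing the coefficients of $t^N$ gives, for every $N\in\Z$ and every $p\geq 1$,
$$\sum_{i\geq 1,\; N-ip\geq m} g_{i,\,N-ip} = 0 .$$
The sum is finite: only indices with $1\leq i\leq (N-m)/p$ occur.

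\emph{Induction on $i$.} I prove $G_r=0$ by induction on $r\geq 1$, assuming $G_1=\dots=G_{r-1}=0$ (nothing is assumed when $r=1$). Fix $j\geq m$; the goal is $g_{r,j}=0$.
\begin{itemize}
\item Choose an integer $p > j-m$ (with $p\geq 1$) and put $N=j+rp$. The term with $i=r$ in the coefficient identity is then exactly $g_{r,j}$.
\item Any term with $i\geq r+1$ would need $N-ip\geq m$. Since $N-(r+1)p = j-p < m$, no such term occurs.
\item The terms with $i<r$ vanish by the induction hypothesis.
\end{itemize}
So the identity for this $(N,p)$ reduces to $g_{r,j}=0$. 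As $j\geq m$ was arbitrary, $G_r=0$, and therefore $H_r=t^{-r}G_r=0$.

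There is no serious obstacle. The one point needing care is the choice of $(N,p)$: taking $p$ large relative to $j-m$ is what isolates a single index $i=r$ in the coefficient identity. The convergence hypothesis $H_i\in t^{m-i}A[[t]]$ is exactly what makes the coefficientwise comparison legitimate and each coefficient sum finite.
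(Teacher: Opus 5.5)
Your proof is correct, but it takes a different route from the paper's.

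\textbf{The paper's route.} It argues by contradiction through valuations:
\begin{itemize}
\item it lets $d_i$ be the order of $H_i$ and takes $j_n$ to be the smallest index minimizing $d_j+nj$;
\item it proves $j_n\geq j_{n+1}$, so $j_n$ stabilizes for $n\geq N$;
\item it checks that for $n=N+1$ the minimum is attained only at $j_0$;
\item it concludes that the lowest-degree coefficient of $\sum_i H_i t^{i(N+1)}$ is the nonzero leading coefficient of $H_{j_0}$, a contradiction.
\end{itemize}

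\textbf{Your route.} You pass to $G_i=t^iH_i\in t^mA[[t]]$ and isolate each coefficient $g_{r,j}$ directly. Taking $p>j-m$ gives $N-(r+1)p=j-p<m$, which removes every index above $r$, and the induction hypothesis removes those below.

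\textbf{Comparison.} Your argument avoids the tie-breaking and the monotonicity/stabilization bookkeeping entirely. It also makes the role of the uniform bound $H_i\in t^{m-i}A[[t]]$ transparent. Read contrapositively, with $r$ minimal such that $G_r\neq 0$ and $j$ the order of $G_r$, it gives two further things:
\begin{itemize}
\item the paper's stabilized index $j_0$ is simply the smallest $i$ with $H_i\neq 0$;
\item the sum is nonzero for every $n>j-m+1$.
\end{itemize}
The paper's leading-term (Newton-polygon-style) argument is the more standard template for such statements, but here it gains nothing over yours. Both proofs need the hypothesis for arbitrarily large $n$, and neither uses anything about the ring $A$.
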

\begin{proof}
Assume the contrary. Denote by $d_{i}\in \Z$ the maximal number with the property $H_{i} \in t^{d_{i}}A[[t]]$ if $H_{i}\neq 0$, and otherwise $d_{i}= +\infty$. From this it follows that $H_{i}t^{in}\in t^{d_{i}+in}A[[t]]$. Note that $d_{i}\geq m-i$, hence for any fixed $n\geq 2$ the set of numbers $\{d_{i}+ni\}_{i\geq 1}$ always has a finite minimum (since $H_{i}\neq 0$ for some $i$).
  Denote by $j_{n}$ the smallest number with the property:
\begin{align}\label{ineq}
d_{j_n} + n j_{n} \leq d_{j} + nj, \quad j\geq 1.
\end{align}
Note that $j_{n} \geq j_{n+1}$. Indeed, from the definition of $j_{n+1}$ we have
$$ d_{j_{n+1}} + (n+1) j_{n+1} \leq d_{j_{n}} + (n+1)j_{n}.$$
Rearranging the terms, we obtain:
$$j_{n} - j_{n+1} \geq (d_{j_{n+1}} + nj_{n+1}) - (d_{j_{n}} + nj_{n}) \geq 0,$$
where the last inequality follows from the definition of $j_{n}$. Hence the sequence of numbers $\{j_{n}\}_{n\geq 2}$ stabilizes for $n\geq N$.
Set
$$j_0 = j_{N+k}, \qquad d_{0} = d_{j_{N+k}}, \quad k\geq 1.$$
Then we claim that for $j \neq j_0$: $d_{0} + (N+1)j_{0} < d_{j} + (N+1)j$, i.e., the inequality in (\ref{ineq}) for $n = N+1$ is strict. Indeed, if $j_{0} < j$, then by the hypothesis $d_{0} + N j_{0} \leq d_{j} + Nj$, therefore adding $j_{0}$ to the left-hand side and $j$ to the right-hand side we obtain the required inequality. If $j_{0} > j$ and
$ d_{0} + (N+1) j_{0} = d_{j} + (N+1)j,$
then a contradiction arises with the minimality of $j_{0} = j_{N+1}$.

Write $H_{j_{0}} = \sum_{k} a_{k}t^{k}$ and consider equality (\ref{sumHzero}) for $n = N+1$. The coefficient of $t^{d_{0}+(N+1)j_{0}}$ equals $a_{d_{0}}$, hence $a_{d_0} = 0$. We obtain a contradiction with the definition of $d_{0}$. Consequently, $H_{i}=0$ for every $i\geq 1$. The lemma is proved.
\end{proof}

For the proof of Lemma~2 we will need the Hasse–Schmidt derivatives, which are defined as follows: if $f = \sum_{k} a_{k}t^{k}\in A((t))$ and $j \geq  0$, then put
$$f^{[j]} = \sum_{k} \binom{k}{j} a_{k}t^{k-j}, \quad \text{where}\quad \binom{k}{j} = \frac{k(k-1)\cdots (k-j+1)}{j!},\ k\in \mathbb{Z}.$$
In particular, $f^{[0]} = f$, $f^{[1]} = f'$ and the element $j! f^{[j]}$ equals the ordinary $j$-th derivative of the element $f$. If $\varphi$ is a continuous $A$-linear automorphism of the algebra $A((t))$ such that $\varphi(t) = t + g$, where $g\in t^{2}A[[t]]$, then the following version of the Taylor formula holds:
\begin{equation}\label{taylor}
\varphi(f) = f(t+g) = \sum_{j\geq 0} f^{[j]}g^{j}.
\end{equation}

\begin{lemma}\label{fixed}
Let $h\in A((t))$ be a series such that $\varphi_{n}(h) = h$ for all $n\geq 2$. Then $h\in A$.
\end{lemma}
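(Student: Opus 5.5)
The plan is to turn the fixed-point condition $\varphi_n(h)=h$ into a family of identities of exactly the shape required by Lemma~\ref{Hzero}, using the Taylor formula (\ref{taylor}). Then we read off from the vanishing of all Hasse–Schmidt derivatives that every coefficient of $h$ except the constant one vanishes.

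\textbf{Step 1: apply Lemma~\ref{Hzero}.} Since $\varphi_n(t)=t+t^n$ with $t^n\in t^2A[[t]]$ for $n\geq 2$, formula (\ref{taylor}) gives
$$0=\varphi_n(h)-h=\sum_{j\geq 1} h^{[j]}t^{nj}\qquad (n\geq 2).$$
Write $h=\sum_k a_kt^k\in t^mA[[t]]$ for some $m\in\Z$. By definition, $h^{[j]}=\sum_k\binom{k}{j}a_kt^{k-j}\in t^{m-j}A[[t]]$. Thus $H_j=h^{[j]}$ satisfies the hypotheses of Lemma~\ref{Hzero}, and we conclude that $h^{[j]}=0$ for every $j\geq 1$. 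Comparing coefficients, this says
$$\binom{k}{j}a_k=0\qquad\text{for all } k\in\Z,\ j\geq 1.$$

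\textbf{Step 2: from vanishing derivatives to $a_k=0$.} This is the only place where the arbitrary ring $A$ matters. We cannot divide by $k$, so $h'=0$ alone is not enough (for instance in characteristic $p$). The fix is to exhibit $1$ as a $\Z$-linear combination of the integers $\binom{k}{j}$ with $j\geq 1$, for each fixed $k\neq 0$.

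For $k>0$ this is immediate: take $j=k$, so that $\binom{k}{k}=1$ and hence $a_k=0$.

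For $k=-r$ with $r\geq 1$, use the identity $(1+x)^{-r}(1+x)^{r}=1$ of formal power series over $\Z$. Comparing coefficients of $x^r$ gives
$$\sum_{j=0}^{r}\binom{-r}{j}\binom{r}{r-j}=0 .$$
Since the $j=0$ term equals $1$, we get
$$1=-\sum_{j=1}^{r}\binom{r}{r-j}\binom{-r}{j}.$$
Multiplying by $a_{-r}$ and using $\binom{-r}{j}a_{-r}=0$ for $j\geq 1$ yields $a_{-r}=0$.

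Hence $a_k=0$ for all $k\neq 0$, so $h=a_0\in A$.

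The main obstacle is precisely Step 2, the coefficient-wise argument for negative $k$ without invertibility of integers. The binomial identity above handles it uniformly. Step 1 is a direct application of the Taylor formula (\ref{taylor}) and Lemma~\ref{Hzero}.
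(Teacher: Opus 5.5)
Your proof is correct and follows the paper's argument: the Taylor formula together with Lemma~\ref{Hzero} gives $h^{[j]}=0$ for $j\geq 1$, and then the key step is that for each $k\neq 0$ the integers $\binom{k}{j}$, $j\geq 1$, generate the unit ideal of $\Z$. The only difference is in how that step is proved. The paper argues by contradiction: if a prime $p$ divided all of them, then $(1+x)^k=1$ in $\mathbb{F}_p[[x]]$. You instead write down an explicit B\'ezout relation, using $\binom{k}{k}=1$ for $k>0$ and the coefficient of $x^r$ in $(1+x)^{-r}(1+x)^r=1$ for $k=-r$; this is just the constructive form of the same identity.
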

\begin{proof}
Let $h\in t^{m}A[[t]]$. From the condition and formula (\ref{taylor}) above, for all $n\geq 2$ we have:
\begin{equation}\label{sumeqzero}
\sum_{j\geq 1} h^{[j]}t^{jn} = 0,
\end{equation}
where $h^{[j]}\in t^{m-j}A[[t]]$. Then by Lemma~\ref{Hzero} we have $h^{[j]}=0$ for $j\geq 1$. Now put $h = \sum_{k} a_k t^{k}$. The preceding equalities imply that $\binom{k}{j}a_{k}=0$ for $k\neq 0$, $j\geq 1$. Next, note that for any fixed $k\neq 0$ the set of binomial coefficients $\{\binom{k}{j}\}_{j\geq 1}$ generates the unit ideal in $\Z$ (otherwise there exists a prime $p$ such that the elements $\binom{k}{j}$ vanish in the field of $p$ elements $\mathbb{F}_{p}$ for $j\geq 1$, hence the element $(1+x)^{k}\in \mathbb{F}_{p}[[x]]$ equals $1$, which is not the case). From this we obtain that $a_k = 0$ for every $k\neq 0$. The lemma is proved.
\end{proof}

In the paper \cite{Lev26} the author of the present note proved an analogous result for residues on the algebra of iterated Laurent series $A((t_1))\cdots((t_n))$ under some restrictions on the ring. For example, the result of \cite{Lev26} is inapplicable to the case $A = \mathbb{F}_{p}$. A natural and interesting question arises: can Theorem~1 be generalized to the case $n>1$ using the multidimensional Parshin residue (see \cite{Par84}, \cite{GorOsi16})?

The author expresses gratitude to Denis Vasilievich Osipov for valuable comments and interest in the work.


\begin{thebibliography}{99}
\small

\bibitem{GorOsi16}
S.~O.~Gorchinskiy, D.~V.~Osipov.
\newblock Continuous homomorphisms between algebras of iterated
Laurent series over a ring.
\newblock \emph{Proc. Steklov Inst. Math.},
\textbf{294} (2016), 47--66.
\newblock \url{https://doi.org/10.1134/S0081543816060031}.


\bibitem{Lev26}
V.~A.~Levashev.
\newblock Polymultiplicative maps associated with the algebra of iterated Laurent series and the higher-dimensional Contou--Carr\`ere symbol.
\newblock \emph{Mat. Sb.}, \textbf{217}, no.~4 (2026), 106--136.
\newblock \url{http://mi.mathnet.ru/sm10308}.
\newblock \url{https://doi.org/10.4213/sm10308}.

\bibitem{OsiZhu16}
D.~V.~Osipov, Xinwen~Zhu.
\newblock The two-dimensional Contou--Carr\`ere symbol and reciprocity laws.
\newblock \emph{J. Algebraic Geom.}, \textbf{25} (2016), 703--774.

\bibitem{Par84}
A.~N.~Parshin.
\newblock Local class field theory.
\newblock In \emph{Algebraic geometry and its applications},
Collected papers.
\newblock \emph{Trudy Mat. Inst. Steklov.},
\textbf{165} (1984), 143--170.
\newblock English translation:
\emph{Proc. Steklov Inst. Math.},
\textbf{165} (1985), 157--185.
\newblock \url{https://www.mathnet.ru/eng/tm2278}.


\bibitem{Ser68}
J.-P.~Serre.
\newblock \emph{Groupes alg\'ebriques et corps de classes}.
\newblock Publ. Inst. Math. Univ. Nancago, VII.
\newblock Hermann, Paris, 1959, 202 pp.

\end{thebibliography}
\end{document}